\numberwithin{equation}{section}
\newtheorem{Theorem}{Theorem}[section]
\newtheorem*{Theorem*}{Theorem}
\newtheorem{Lemma}[Theorem]{Lemma}
\newtheorem{Proposition}[Theorem]{Proposition}
 { \theoremstyle{definition}
\newtheorem{Definition}[Theorem]{Definition}

\newtheorem{Remark}[Theorem]{Remark} }
\newcommand{\mnote}[1]
{\protect{\stepcounter{mnotecount}}$^{\mbox{\footnotesize
$
\bullet$\themnotecount}}$ \marginpar{
\raggedright\tiny\em
$\!\!\!\!\!\!\,\bullet$\themnotecount: #1} }
\newcommand{\C}{\mathbb{C}}
\newcommand{\Z}{\mathbb{Z}}
\newcommand{\PP}{\mathbb{P}}
\newcommand{\RP}{\mathbb{RP}}
\newcommand{\tL}{\widetilde{L}}
\newcommand{\tP}{\widetilde{P}}
\newcommand{\R}{\mathbb{R}}
\newcommand{\Rho}{\mathrm{P}}
\def\p{\partial}
\def\be{\begin{equation}}
\def\u{\mathfrak{u}}
\def\z{\mathfrak{z}}
\def\I{\mathcal{I}}
\def\ee{\end{equation}}
\def\bea{\begin{eqnarray}}
\def\eea{\end{eqnarray}}
\newcommand{\spp}{\mathbb{S}}
 \renewcommand{\Z}{\mathcal{Z}}
 \newcommand{\U}{\mathcal{U}}
\newcommand{\cI}{{\mathcal I}}
\newcommand{\st}{\mid }
\newcommand{\uh}{\widehat{u}}
\newcommand{\zh}{\widehat{z}}
\begin{document}
\allowdisplaybreaks

\newcommand{\arXivNumber}{2201.04717}

\renewcommand{\thefootnote}{}

\renewcommand{\PaperNumber}{027}

\FirstPageHeading

\ShortArticleName{Twistor Theory of Dancing Paths}

\ArticleName{Twistor Theory of Dancing Paths\footnote{This paper is a~contribution to the Special Issue on Twistors from Geometry to Physics in honor of Roger Penrose. The~full collection is available at \href{https://www.emis.de/journals/SIGMA/Penrose.html}{https://www.emis.de/journals/SIGMA/Penrose.html}}}

\Author{Maciej DUNAJSKI}

\AuthorNameForHeading{M.~Dunajski}

\Address{Department of Applied Mathematics and Theoretical Physics, University of Cambridge,\\ Wilberforce Road, Cambridge CB3 0WA, UK}
\Email{\href{mailto:m.dunajski@damtp.cam.ac.uk}{m.dunajski@damtp.cam.ac.uk}}
\URLaddress{\url{https://www.damtp.cam.ac.uk/user/md327/}}

\ArticleDates{Received January 14, 2022, in final form March 28, 2022; Published online March 31, 2022}

\Abstract{Given a path geometry on a surface $\mathcal{U}$, we construct a causal structure on a~four-manifold which is the configuration space of non-incident pairs (point, path) on $\mathcal{U}$. This causal structure corresponds to a conformal structure if and only if $\mathcal{U}$ is a real projective plane, and the paths are lines. We~give the example of the causal structure given by a~symmetric sextic, which corresponds on an ${\rm SL}(2,{\mathbb R})$-invariant projective structure where the paths are ellipses of area $\pi$ centred at the origin. We~shall also discuss a~causal structure on a~seven-dimensional manifold corresponding to non-incident pairs (point, conic) on a~projective plane.}

\Keywords{path geometry; twistor theory; causal structures}

\Classification{32L25; 53A20}

\begin{flushright}
\begin{minipage}{60mm}
\it Dedicated to Roger Penrose\\ on the occasion of his 90th birthday
\end{minipage}
\end{flushright}

\renewcommand{\thefootnote}{\arabic{footnote}}
\setcounter{footnote}{0}

\vspace{1mm}

\section{Introduction}
It is a great honor and pleasure to be able to contribute to the celebrations
of Roger's 90th birthday. Roger is now twice as old as his
nonlinear graviton construction~\cite{Penrose}. When I~arrived in Oxford as a graduate student, it was
already known that the integrability of the nonlinear
solitonic systems has its roots in the Ward transform for the anti-self-dual Yang--Mills equa\-tions~\cite{mw, ward0}. I spent three years of my PhD
showing, under Lionel Mason's supervision, that the
curved space twistor theory behind the nonlinear graviton construction
underlies the integrability of the complementary class of the, so called,
dispersionless integrable systems~\cite{MD_phd}. Roger made several comments
to me about my work, but
on average it took me three months to understand his geometric insight,
and another three to realise that he was right. At that time Roger had already
moved to the notoriously difficult googly problem, as well as the interplay between gravity and quantum mechanics.
The photograph on Figure~\ref{Fig1} was taken at the GRG meeting in Pune, India in 1997, where Roger gave a public lecture about the latter.
\begin{figure}[h]\centering
\includegraphics[width=147mm]{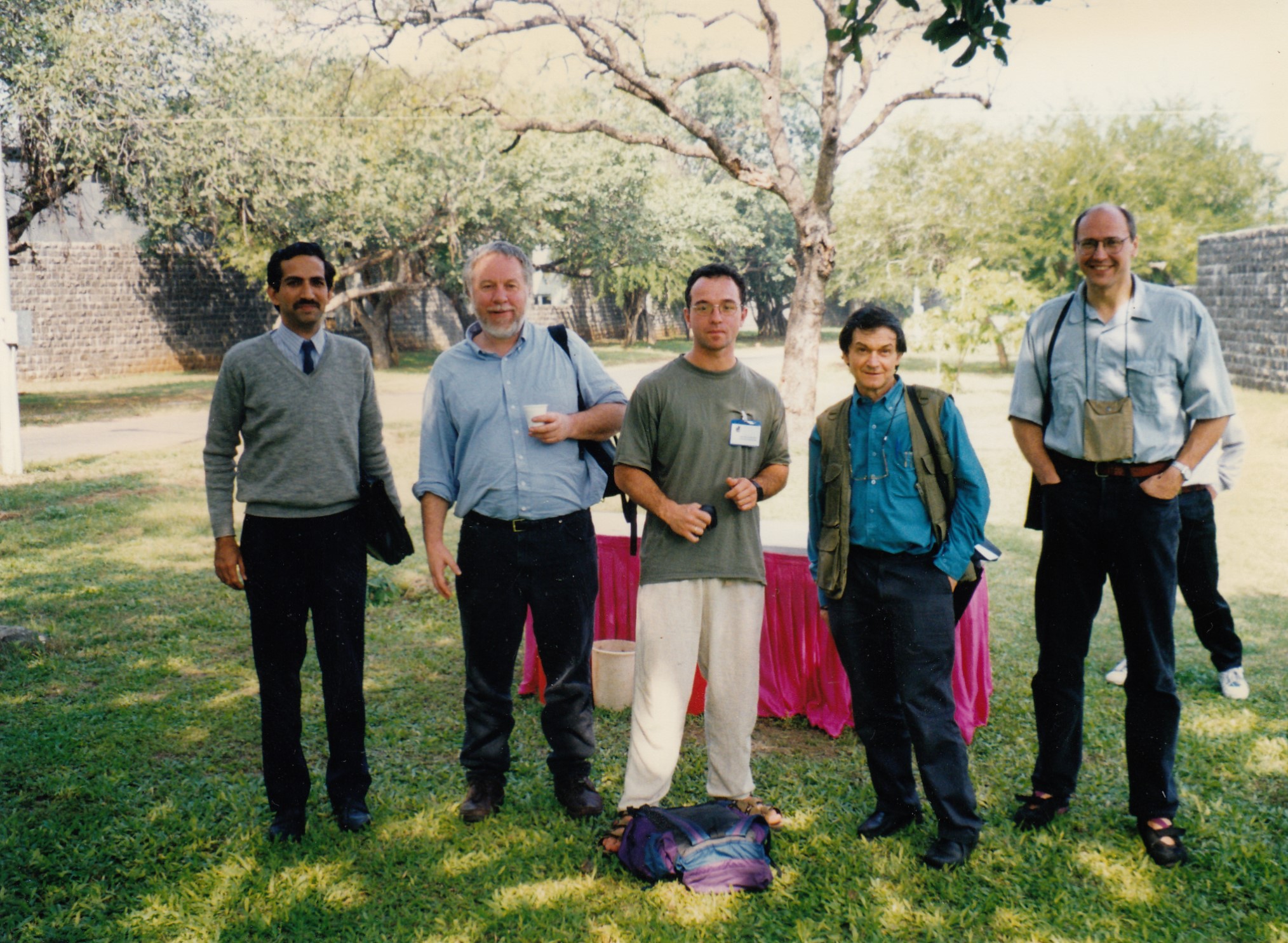}
\caption{Pune, December 1997. From the left: Devendra Kapadia, George Sparling, Maciej Dunajski, Ro\-ger Penrose, Jerzy Lewandowski.}\label{Fig1}
\end{figure}
The lecture proved to be so popular among the citizens of Pune that the lecture hall had to be sealed off by riot police, to prevent overcrowding.

The nonlinear graviton construction describes a link between
algebraic geometry of holomorphic curves in a complex three-fold (the twistor space), and differential geometry on the moduli space $M$ of these curves. Two curves in three
dimensions generically do not intersect (this is backed by a
geometric intuition based on curves in $\R^3$ which carries over
to the twistor space, as the curves are holomorphic). The intersection
condition defines a notion of null separation: two points on $M$ are null separated iff the corresponding curves in the twistor space intersect. All~curves intersecting a fixed curve $C$ form a hyper-surface in $M$, consisting of points null separated from $C$.
The assumptions made about the normal bundle of twistor curves in the nonlinear graviton construction ensure that this null separation condition leads to quadratic light cones on~$TM$, and therefore defines a conformal structure. Dropping some of these assumptions leads to {\em causal structures}, where the associated null cones are no-longer quadratic, but instead of higher order \cite{HS1, HS2, krynski, omid}. In this paper I give examples of such
structures which result from path geometries on two-dimensional
surfaces. These geometries, as well as their duals, generalise the Klein duality between ${\RP^2}$ and ${\big(\RP^2\big)}^*$.

\section{Summary of the results}
\subsection{Path geometry}
A path geometry $\u$ on a two-dimensional surface $\U$ is a family of unparametrised smooth curves, from now on called paths, one through each point of $\U$ in each direction.
If the paths of $(\U, \u)$ are unparametrised geodesics of a torsion-free
affine connection
$\nabla$,
then the path geometry is called {\em projective}. The corresponding projective
structure is an equivalence class $[\nabla]$ of torsion-free affine
connections sharing their unparametrised geodesics with $\nabla$. If at least one connection in $[\nabla]$ is
the Levi-Civita connection of some (pseudo) Riemannian metric $g$, then the corresponding path geometry is called {\em metrisable}. The projective path geometries were invariantly characterised more than a century ago
\cite{cartan,liouville}, but the necessary and sufficient conditions for a path geometry to be metrisable were only given relatively recently \cite{BDE}.

 A {\em dual path geometry} $(\Z, \z)$ is a surface $\Z$ whose points correspond to the paths of $\U$. The paths of $\z$ correspond to points of $\U$. This gives rise to a double-fibration picture \cite{bryant, cartan}
\be\label{double_fib}
\U\stackrel{\mu}\longleftarrow
{\mathcal I}\stackrel{\nu}\longrightarrow {\Z},
\ee
where $\I=\{(u\in \U, z\in \Z), u\in z\}\subset
\U\times \Z$ is the three-dimensional incidence space. Both projections in (\ref{double_fib}) are submersions, and for
all incident pairs $(u, z)\in \I$ the curves $\mu^{-1}(u)$ and~$\nu^{-1}(z)$
meet transversally, and their
tangents span a two-plane in $T_{(u,z)}\I$ which gives rise to a~contact structure on $\I$.

\subsection{Dancing pairs}
Let $M=(\U\times \Z)\setminus\I$ be the four-dimensional manifold of non-incident pairs.
A point in $m\in M$ consists of a point $u$ in $\U$ and a path $z$ of $\u$ which does not contain $u$.
\begin{Definition}
\label{defi1}
Two points $(u_1, z_1)$ and $(u_2, z_2)$ in $M$ are dancing if there exists a path in $(\U, {\u})$ which contains
 three points: $u_1$, $u_2$, and the intersection $z_1\cap z_2$.
\end{Definition}
 The terminology is taken from \cite{bor},
where the term {\em dancing} is introduced (and motivated) at an infinitesimal level and directly leads to
the Definition~\ref{definull}. The Definition~\ref{defi1} makes sense both locally, and globally on $M$.
In the latter case the paths $z_1$ and $z_2$ may intersect at more than one point
(e.g., if $\U$ is the round sphere, and the paths are the great circles). In this case we require that there exists a path containing
$u_1$, $u_2$ and at least one point of intersection of~$z_1$,~$z_2$. The dancing condition could of course equivalently have been defined in terms of the dual path geometry $(\Z, \z)$.

The simplest example is provided by the classical projective duality, where
$\U=\RP^2$ and $\Z=\big(\RP^2\big)^*$. Both path geometries
$\u$ and $\z$ are projective, and metrisable (in six different ways) by metrics
of constant curvature.
In this case the dancing condition gives rise
to a~neut\-ral signature conformal structure on $M={\rm SL}(3, \R)/{\rm GL}(2, \R)$ which contains
an anti-self-dual Einstein metric with non-zero Ricci scalar, and the isometry group ${\rm SL}(3, \R)$
\cite{bor,casey, DM}. We~shall review this example in Section~\ref{section2}. In general it is convenient to work
with
the infinitesimal dancing condition \cite{bor}, where
$u_1=u$, $u_2=u+\epsilon \dot{u}+\cdots$ and
$z_1=z$, $z_2=z+\epsilon\dot{z}+\cdots$. The tangent vector $(\dot{u}, \dot{z})\in T_{(u, z)}M$ describes
the direction of a path through~$u$, and the turning point of the
path~$z$ (i.e., the point of intersection of $z$ with a nearby path).
\begin{Definition}
\label{definull}
A tangent vector $(\dot{u}, \dot{z})\in T_{(u, z)}M$
is called
{\em null} if the path through $u$ in the direction of $\dot{u}$ contains the turning point of $z$.
\end{Definition}

In Section~\ref{section3} we shall show how the infinitesimal dancing condition can put in the form
\be
\label{lagrangian}
{\mathcal L}(u, z, \dot{u}, \dot{z})=0,
\ee
where ${\mathcal L}\colon TM\rightarrow \R$. In the projectively flat case where
$\U=\RP^2$ and $M={\rm SL}(3)/{\rm GL}(2)$ described above
the function ${\mathcal L}$ is homogeneous of degree two in $(\dot{u}, \dot{z})$, and gives a null-geodesic
Lagrangian for a conformal structure on $M$. One may wonder whether other examples of path geometries
$(\U, \u)$ give rise to conformal structures. The answer to this is negative,
and is provided by the following local (and therefore also global) rigidity
theorem
\begin{Theorem}
\label{theo_rigid}
If the dancing condition on a path geometry $\U$ defines a conformal structure
on $M=(\U\times \Z)\setminus {\mathcal I}$, then $\U=\RP^2$ with its flat projective
structure, and the conformal structure is represented by the
${\rm SL}(3)$-invariant anti-self-dual Einstein metric on $M={\rm SL}(3)/{\rm GL}(2)$.
\end{Theorem}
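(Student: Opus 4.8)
The plan is to analyse the null cone of the candidate conformal structure one fibre at a time, and to show that its being a quadric at every point of $M$ is so restrictive that all local invariants of the path geometry must vanish.

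First I would fix a non-incident pair $(u,z)\in M$ and look at ${\mathcal L}(u,z,\,\cdot\,,\,\cdot\,)$ as a function on the fibre $T_{(u,z)}M=T_u\U\oplus T_z\Z$. By Definition~\ref{definull} a vector $(\dot u,\dot z)$ is null exactly when the path through $u$ determined by the \emph{direction} $[\dot u]\in\PP(T_u\U)$ contains the turning point of $z$, and the latter depends only on $[\dot z]\in\PP(T_z\Z)$; hence the zero set of ${\mathcal L}$ on the fibre is invariant under the two independent rescalings $\dot u\mapsto\lambda\dot u$ and $\dot z\mapsto\mu\dot z$. Such a cone in $\R^4\cong\R^2\oplus\R^2$ is the cone over a curve $R_{(u,z)}\subset\PP(T_u\U)\times\PP(T_z\Z)\cong\RP^1\times\RP^1$, and it is a non-degenerate quadric cone if and only if ${\mathcal L}$ is a non-zero multiple of a pairing $q_{ij}\dot u^i\dot z^j$ with $\det(q_{ij})\neq0$ — equivalently, if and only if $R_{(u,z)}$ is the graph of a projective isomorphism $\PP(T_u\U)\cong\PP(T_z\Z)$ (a curve of bidegree $(1,1)$), in which case the form automatically has neutral signature. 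So the hypothesis of the theorem is equivalent to: for every $(u,z)\in M$, the correspondence $R_{(u,z)}$ is the graph of a projective isomorphism.

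Next I would translate this into the language of the path geometry. Since the turning-point map identifies a neighbourhood in $\PP(T_z\Z)$ with a neighbourhood in the curve $z\subset\U$, the statement ``$R_{(u,z)}$ is a graph over $\PP(T_u\U)$'' says that each path through $u$ meets the path $z$ in exactly one point, ``$R_{(u,z)}$ is a graph over $\PP(T_z\Z)$'' says that $u$ and each point of $z$ lie on exactly one common path, and the bidegree-$(1,1)$ condition says in addition that this incidence correspondence is projective. Letting $(u,z)$ vary over $M$ and using the explicit formula for ${\mathcal L}$ from Section~\ref{section3}, one writes these as differential conditions on the function $F$ defining the path geometry by $y''=F(x,y,y')$: the projectivity of the correspondence forces $\partial^{4}F/\partial(y')^{4}=0$, so $(\U,\u)$ is a projective structure, and the ``exactly one intersection point'' condition then forces the remaining (Liouville) projective-curvature invariant to vanish. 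By the classical characterisation of flat path geometries \cite{cartan,liouville} this means $(\U,\u)$ is locally equivalent to $\R^2$ with its straight lines; since the germ is flat, its extension over a four-manifold of non-incident pairs is the complete model, so $\U=\RP^2$ with the lines as paths — this is why the local statement is already global.

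Finally, with $\U=\RP^2$ one has $\Z=(\RP^2)^*$ and $M=\mathrm{SL}(3,\R)/\mathrm{GL}(2,\R)$, and the computation reviewed in Section~\ref{section2} shows that the degree-two Lagrangian ${\mathcal L}$ is the null-geodesic Lagrangian of the $\mathrm{SL}(3,\R)$-invariant anti-self-dual Einstein metric on $M$, which is the required conformal structure. The main obstacle I expect is the middle step: converting the pointwise condition ``${\mathcal L}(u,z,\,\cdot\,,\,\cdot\,)$ is a non-degenerate quadratic form for all $(u,z)$'' into the vanishing of the two fundamental invariants of $(\U,\u)$. This needs the explicit expression for ${\mathcal L}$ and a careful bookkeeping of which derivatives of $F$ enter the bidegree-$(1,1)$ condition; everything else is fibrewise linear algebra or an appeal to the classical structure theory.
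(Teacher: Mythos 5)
Your opening reduction is correct and is genuinely different from the paper's route. The dancing cone, referred to the product splitting $T_{(u,z)}M=T_u\U\oplus T_z\Z$, is indeed invariant under independent rescalings of $\dot u$ and $\dot z$ (the elimination scheme of Section~\ref{section3} determines $z^*$ from $(u,[\dot u])$ and $u^*$ from $(z,[\dot z])$ alone), and a short argument shows that a non-degenerate quadratic cone with this bi-invariance must be the zero set of a pairing $q_{ij}\dot u^i\dot z^j$ with $\det q\neq 0$; note that in the affine coordinates of~(\ref{affine_c}) the term $(\zeta\cdot{\rm d}x)^2$ appears only because the coordinate splitting is not the product splitting. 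So the hypothesis is correctly equivalent to: for every non-incident pair, the incidence correspondence $R_{(u,z)}\subset\PP(T_u\U)\times\PP(T_z\Z)$ has bidegree $(1,1)$, i.e., is the graph of a projectivity. The paper instead never touches the fibrewise shape of the cone: it invokes Lemma~\ref{alpha_s} to get anti-self-duality via Penrose's theorem, the contact structure on $\I$ and Ward's theorem to get an Einstein representative, the Dunajski--Mettler normal form (\ref{metric_md})--(\ref{thetaAB}) to express the metric through a projective structure, and a computer-algebra computation showing that the second $\beta$-family forces $\nabla_{[A}\Rho_{B]C}=0$, i.e., projective flatness.

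The genuine gap is exactly the step you flag as the ``main obstacle'': deducing from the projectivity of $R_{(u,z)}$ for all $(u,z)$ that the two fundamental invariants of the path geometry vanish. As written this is an assertion, not a proof, and it is the entire content of the theorem. Two specific difficulties: first, the conditions ``each path through $u$ meets $z$ once'' and ``the correspondence is a projectivity'' are not pointwise conditions on a jet of $F$ at a single point --- they compare data at $u$ with data along the whole path $z$ --- so extracting $\partial^4F/\partial(y')^4=0$ and the vanishing of the Liouville/Tresse invariant requires a degeneration argument (e.g., letting $u$ approach $z$ and expanding $\mathcal L$ in the separation) or an explicit computation with the eliminant of~(\ref{lagrangian}), neither of which you supply; second, your attribution of which condition kills which invariant is a guess, and could well be wrong (in the paper's argument the two invariants are not separated this way: projectivity and flatness emerge together from the compatibility of the second $\beta$-family). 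The closing local-to-global remark and the identification with the ${\rm SL}(3)$ model once flatness is known are fine. Until the middle step is actually carried out, the proposal is a plausible programme rather than a proof.
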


We shall prove this theorem in Section~\ref{section3} by twistor methods.
In Section~\ref{section4} we shall consider
a path geometry arising from an ${\rm SL}(2, \R)$ invariant projective structure. We~shall prove
\begin{Theorem}\label{theo_2}
Let $\U=\R^2$ and let $\Z$ be the set of ellipses in $\U$ centred at the origin and of area $\pi$.
The dancing condition on $M=(\U\times \Z)\setminus\I$ defines an ${\rm SL}(2, \R)$-invariant sextic
\eqref{lagrangian} on $TM$ which is quartic in $\dot{u}$ and quadratic in $\dot{z}$.
\end{Theorem}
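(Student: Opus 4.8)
The plan is to write the dancing condition explicitly in coordinates adapted to the $\mathrm{SL}(2,\mathbb R)$-action and then extract the polynomial structure of $\mathcal L$ in $(\dot u,\dot z)$. First I would coordinatise $M$. A point of $\mathcal U=\mathbb R^2$ is $u=(x,y)$. An ellipse centred at the origin of area $\pi$ is the zero set of $q(u)=a x^2+2bxy+cy^2-1$ with $ac-b^2=1$, so $\mathcal Z$ is the $\mathrm{SL}(2,\mathbb R)$-orbit $\{ac-b^2=1\}$, a copy of the hyperbolic plane (equivalently $\mathrm{SL}(2,\mathbb R)/\mathrm{SO}(2)$), and $\mathrm{SL}(2,\mathbb R)$ acts on $u$ by its standard representation and on the symmetric matrix $Q=\begin{pmatrix}a&b\\ b&c\end{pmatrix}$ by $Q\mapsto (g^{-1})^{T}Qg^{-1}$. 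The non-incidence condition $u\notin z$ is $q(u)\neq0$; generically $q(u)>0$ (exterior) on the component of $M$ we work on.

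Next I would implement Definition~\ref{defi1}/\ref{definull} infinitesimally. Fix $(u,z)\in M$ with $z=\{q=0\}$. The turning point of $z$ along the variation $\dot z$ (i.e. $\dot Q$) is the point $p\in z$ where the nearby ellipse $\{q+\epsilon\dot q=0\}$ is tangent to $z$: algebraically, $p$ solves $q(p)=0$ together with $\nabla q(p)\parallel \nabla\dot q(p)$, which on a conic cuts out the two intersection points $z\cap\dot z$; the turning point is the relevant one. Since the projective structure on $\mathcal U=\mathbb R^2$ here is the $\mathrm{SL}(2,\mathbb R)$-invariant one whose (unparametrised) geodesics are these centred ellipses, the ``path through $u$ in the direction $\dot u$'' is the unique area-$\pi$ centred ellipse through $u$ tangent to $\dot u$ there; call it $\{\tilde q=0\}$ with $\tilde Q\in\mathcal Z$ determined by $\tilde q(u)=0$ and $\dot u\cdot\nabla\tilde q(u)=0$. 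The infinitesimal dancing condition is then simply
\[
\tilde q(p)=0,
\]
the statement that this path also passes through the turning point $p$ of $z$. Everything in sight — $\tilde Q$ as a function of $(u,\dot u)$, and $p$ as a function of $(Q,\dot Q)$ — is algebraic, so $\mathcal L(u,z,\dot u,\dot z):=\tilde q(p)$ (suitably cleared of denominators) is a polynomial on $TM$, and it is manifestly $\mathrm{SL}(2,\mathbb R)$-invariant because every ingredient transforms equivariantly.

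The heart of the matter — and the step I expect to be the real obstacle — is the bookkeeping that pins down the bidegree: quartic in $\dot u$, quadratic in $\dot z$, hence sextic overall. I would argue it as follows. In $\dot z=\dot Q$: the turning point $p$ is the solution of a linear system built from $\nabla q$ and $\nabla\dot q$ constrained to $z$, so $p$ depends rationally on $\dot Q$ with numerator linear and denominator linear in $\dot Q$ (being one of the two points of $z\cap\dot z$, which after using the trace constraint $\mathrm{tr}(Q^{-1}\dot Q)=0$ coming from $\det(Q+\epsilon\dot Q)=1$ are the roots of a quadratic whose coefficients are linear in $\dot Q$); substituting into the quadratic $\tilde q$ and clearing the common denominator produces, after the smoke clears, exact degree $2$ in $\dot z$. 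In $\dot u=\dot u$: solving $\tilde q(u)=0,\ \dot u\cdot\nabla\tilde q(u)=0$ together with the constraint $\det\tilde Q=1$ for $\tilde Q$ is the classical problem of the centred conic through a point in a prescribed direction; parametrising $\tilde Q$ by one hyperbolic parameter and eliminating, one finds $\tilde Q$ is a rational function of $\dot u$ of degree $2$ in $\dot u$ per matrix entry, so $\tilde q(p)=p^{T}\tilde Q p-1$ contributes degree $4$ in $\dot u$ after clearing denominators; no cancellation drops it further because the leading $\dot u^{4}$ term is, up to a nonzero factor, the discriminant-type expression $\big((\dot u)^{T}Q\dot u\big)^2$ evaluated against $p$, which does not vanish identically on $M$. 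Assembling, $\mathcal L$ is homogeneous of bidegree $(4,2)$ in $(\dot u,\dot z)$, total degree six, and $\mathrm{SL}(2,\mathbb R)$-invariant, and one checks it is not proportional to the square of a quadratic form — consistent with Theorem~\ref{theo_rigid} — by exhibiting a single vector $(\dot u,\dot z)$ at a single point where the null cone $\{\mathcal L=0\}$ has a non-quadratic branch, e.g. four distinct null directions among the $\dot u$'s for fixed $\dot z$. The only genuinely delicate point is controlling the possible spurious factors introduced when clearing denominators and when choosing the correct root for the turning point; I would handle this by working at a convenient base point (say $Q=\mathrm{Id}$, $u=(r,0)$ with $r>1$) where all the rational expressions simplify, reading off the bidegree there, and then invoking $\mathrm{SL}(2,\mathbb R)$-equivariance together with the transitivity of the action on such configurations to conclude the bidegree everywhere.
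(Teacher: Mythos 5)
Your overall strategy --- encode the infinitesimal dancing condition as ``the path through $u$ tangent to $\dot u$ passes through a turning point of $z$ determined by $\dot z$'', eliminate the auxiliary incident pair algebraically, and pin down the bidegree at a convenient base point using ${\rm SL}(2,\R)$-equivariance --- is essentially the paper's (the paper normalises to the section $x=1$, $y=0$, $a=0$ of the ${\rm SL}(2,\R)$-action and eliminates via a resultant in the slope $P=Y/X$ of the turning point). However, two of your key steps contain genuine errors. First, your algebraic characterisation of the turning point is wrong: the turning point is the limiting intersection of $z=\{q=0\}$ with the nearby path $\{q+\epsilon\dot q=0\}$, hence satisfies $q(p)=0$ \emph{and} $\dot q(p)=p^{\rm T}\dot Q p=0$. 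Your condition $\nabla q(p)\parallel\nabla\dot q(p)$ instead selects the critical points of $\dot q|_{z}$, which are different points: for $Q=\mathrm{Id}$ and $\dot Q=\mathrm{diag}(1,-1)$ the correct turning points lie on the diagonals $p_1=\pm p_2$, while your condition gives the points on the coordinate axes. Moreover there are four turning points (two antipodal pairs), not two --- a count that enters the degree.

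Second, the bidegree bookkeeping does not hold up. An individual turning point is not rational in $\dot Q$ (the pair of lines $p^{\rm T}\dot Q p=0$ splits only after extracting a square root of a discriminant), so $\tilde q(p)$ for a single choice of $p$ is neither polynomial nor well defined; to obtain a polynomial ${\mathcal L}$ one must take the product (resultant) over the two essentially distinct turning points. That single resultant produces \emph{both} doublings: each factor is homogeneous of degree $2$ in $\dot u$ and degree $1$ in $\dot z$, so the eliminant is quartic in $\dot u$ and quadratic in $\dot z$, exactly as in the paper's equations \eqref{eq:red} and their resultant \eqref{sex_gil}. Your route to degree $4$ in $\dot u$ --- ``each entry of $\tilde Q$ has degree $2$ in $\dot u$, hence $p^{\rm T}\tilde Q p-1$ has degree $4$'' --- is incorrect, since $p^{\rm T}\tilde Q p$ is \emph{linear} in the entries of $\tilde Q$, so clearing a common degree-$2$ denominator yields degree $2$, not $4$; and in fact $\tilde Q$ is honestly rational in $(u,\dot u)$ with no square roots, because the constraint $\det\tilde Q=1$ becomes linear along the line cut out by the two incidence/tangency conditions. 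With the turning-point equation corrected and the elimination organised as the resultant of two quadratics in $P$ (one with coefficients quadratic in $\dot u$, one with coefficients linear in $\dot z$), your plan would reproduce the paper's sextic; as written, both the defining equations and the degree count come out wrong.
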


The path geometry $(\U, \u)$ from Theorem~\ref{theo_2} is projective and metrisable. The dual path geometry is not
projective.

Finally in Section~\ref{section5} we shall move away from path geometries, and instead consider
a seven-dimensional
manifold $N\subset \PP^2\times \PP^5$ consisting of non-incident pairs $(a, A)$, where
$a\in \PP^2$ is a~point, and $A\subset \PP^2$ is an irreducible conic not containing $a$. We~shall say
that two pairs $(a, A)$ and $(b, B)$ of points-conics are dancing if there exists an irreducible conic
containing $(a, b)$ and the four points $A\cap B$ of intersections between $A$ and $B$. We~shall prove
\begin{Theorem}
\label{theo3}
Let $A$, $B$ be $3$ by $3$ matrices representing non-singular conics in $\RP^2$, and let~$a$,~$b$ be vectors in $\R^3\setminus\{0\}$ representing points in $\RP^2$.
Two pairs $(a, A)$ and $(b, B)$ are dancing iff
\[
\big(a^{\rm T} A a\big)\big(b^{\rm T}B b\big)-\big(a^{\rm T}Ba\big)\big(b^{\rm T} A b\big)=0.
\]
The infinitesimal dancing condition gives rise to a
conformal structure on $TN$ of signature $(+ + - - 0 0 0 )$
which is degenerate along the fibres of a projection
$N\rightarrow \big(\PP^2\times \big(\PP^2\big)^*\big)\setminus {\I}$ sending the conic
$A$ to the polar line of $(a, A)$.
\end{Theorem}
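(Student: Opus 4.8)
The plan is to treat the two assertions separately. For the algebraic characterisation of the dancing condition, I would use the classical fact that the conics through the four intersection points $A\cap B$ form the pencil $\{\lambda A+\mu B : [\lambda:\mu]\in\PP^1\}$: any irreducible conic passing through all four base points of the pencil lies in the pencil (this holds over $\C$; over $\R$ one must be mildly careful if $A\cap B$ is not four real points, but the pencil description is still the right one, and the final polynomial identity is defined over $\R$ regardless). Thus $(a,A)$ and $(b,B)$ are dancing iff there exist $[\lambda:\mu]$ with $a^{\rm T}(\lambda A+\mu B)a=0$ and $b^{\rm T}(\lambda A+\mu B)b=0$ simultaneously. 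This is a homogeneous linear system in $(\lambda,\mu)$ with coefficient matrix
\[
\begin{pmatrix} a^{\rm T}Aa & a^{\rm T}Ba\\ b^{\rm T}Ab & b^{\rm T}Bb\end{pmatrix},
\]
and it has a nonzero solution iff its determinant vanishes, which is exactly the stated identity $(a^{\rm T}Aa)(b^{\rm T}Bb)-(a^{\rm T}Ba)(b^{\rm T}Ab)=0$. One should also record that non-incidence of $(a,A)$ (i.e.\ $a^{\rm T}Aa\neq0$) and of $(b,B)$ guarantees the solution conic is not the degenerate member hitting $a$ or $b$ trivially, so it genuinely contains both $a$ and $b$.

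For the infinitesimal dancing condition, I would linearise the identity. Write a point of $N$ as $(a,A)$ with $a^{\rm T}Aa\neq0$, set $b=a+\varepsilon\dot a+\cdots$, $B=A+\varepsilon\dot A+\cdots$, and expand $F(a,A,b,B):=(a^{\rm T}Aa)(b^{\rm T}Bb)-(a^{\rm T}Ba)(b^{\rm T}Ab)$ to first order in $\varepsilon$. Since $F$ vanishes to second order on the diagonal (both terms agree there), the leading term is the $O(\varepsilon^2)$ coefficient, which will be a quadratic form ${\mathcal L}(a,A,\dot a,\dot A)$ on the fibres of $TN$; this is the null-cone Lagrangian. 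I expect ${\mathcal L}$ to come out, up to an overall nonzero factor like $a^{\rm T}Aa$, as a quadratic expression of the schematic shape
\[
(a^{\rm T}Aa)\big(\dot a^{\rm T}A\dot a + \text{terms in }\dot A\big) - \big(a^{\rm T}A\dot a + \tfrac12 a^{\rm T}\dot A a\big)^2 + \cdots,
\]
and the main computational task is to organise this expansion cleanly. Its signature is then read off by diagonalising the quadratic form: the expected answer $(++--000)$ reflects the $2+2$ signature of the conformal structure transverse to a three-dimensional kernel.

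Identifying that three-dimensional kernel is the conceptual heart of the theorem, and I expect it to be the main obstacle. The claim is that ${\mathcal L}$ is degenerate precisely along the fibres of $N\to(\PP^2\times(\PP^2)^*)\setminus\I$ sending $(a,A)$ to the \emph{polar line} $\ell_{a,A}=\{x : a^{\rm T}Ax=0\}$ of $a$ with respect to $A$. To prove this I would show directly that a tangent vector $(\dot a,\dot A)$ lies in the kernel of ${\mathcal L}$ iff it preserves the pair (point, polar line) to first order, i.e.\ iff $\dot a=0$ and the variation $\dot A$ leaves the line $Aa$ (as a point of $(\PP^2)^*$) fixed — equivalently $\dot A a$ is proportional to $Aa$. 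The fibres of the stated projection are indeed $3$-dimensional (the space of conics through a fixed point with a fixed tangent line there, modulo scale: $6-1-1-1=3$), which matches. The cleanest route is probably to exhibit $F$ itself as pulling back, modulo the non-incidence factors, from a function on $(\PP^2\times(\PP^2)^*)\times(\PP^2\times(\PP^2)^*)$ built from $a$, the covector $Aa$, $b$ and the covector $Bb$ — note $a^{\rm T}Ba=(Ba)\cdot a$ pairs the point $a$ with the polar covector of $b$, and symmetrically — so that the whole dancing relation, and hence ${\mathcal L}$, descends to the quotient and is automatically constant along its fibres; the degeneracy direction is then forced by dimension count together with the fact that the descended structure on the $4$-manifold $(\PP^2\times(\PP^2)^*)\setminus\I$ is non-degenerate (which is essentially the classical dancing conformal structure of Theorem \ref{theo_rigid} with $\U=\RP^2$).
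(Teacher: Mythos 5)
Your first two steps coincide with the paper's proof: the conic through the four base points must lie in the pencil $A+tB$, the two conditions $a^{\rm T}(A+tB)a=0$, $b^{\rm T}(A+tB)b=0$ form a linear system in $t$ whose compatibility is the stated $2\times 2$ determinant; and the null cone is the $O(\varepsilon^2)$ coefficient of $F$ along the diagonal. (For the record, that coefficient comes out cleaner than your schematic guess: the $\dot a^{\rm T}A\dot a$ terms cancel and one is left with $\big(a^{\rm T}Aa\big)\big(a^{\rm T}\dot A\dot a\big)-\big(a^{\rm T}\dot Aa\big)\big(a^{\rm T}A\dot a\big)$, which at the ${\rm SL}(3,\R)$-normalised point $a_0=(0,0,1)^{\rm T}$, $A_0=\operatorname{diag}(-1,-1,1)$ is just $\dot x\dot A_{13}+\dot y\dot A_{23}$; the paper reads off the signature and the kernel from this normal form and checks directly that the kernel equals $\ker{\rm d}\pi$.)

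The one genuine problem is your preferred route for identifying the degeneracy directions. The function $F$ does \emph{not} descend through $\pi\colon(a,A)\mapsto(a,Aa)$: the cross term $a^{\rm T}Ba$ pairs $a$ with the polar line of $a$ with respect to $B$, not with the polar line $Bb$ of $b$, so it cannot be expressed in terms of $(a,Aa,b,Bb)$ alone (perturbing $B$ within the fibre over $(b,Bb)$ changes $a^{\rm T}Ba$ for generic $a$). The paper devotes a Remark to exactly this point: the pullback of the dancing condition on $M=\big(\PP^2\times(\PP^2)^*\big)\setminus\I$ is $\big(a^{\rm T}Aa\big)\big(b^{\rm T}Bb\big)-\big(a^{\rm T}Bb\big)\big(b^{\rm T}Aa\big)=0$, which differs from the dancing condition on $N$. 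So the conformal structure on $N$ is degenerate along the fibres of $\pi$ but is not basic, and your ``dimension count plus non-degeneracy downstairs'' argument has no foundation. Your fallback — verifying directly that $(\dot a,\dot A)\in\ker\mathcal L$ iff $\dot a=0$ and $\dot Aa$ is proportional to $Aa$, i.e.\ iff $(\dot a,\dot A)\in\ker{\rm d}\pi$ — is the correct route and is what the paper does (in normalised coordinates); you should carry that computation out rather than lean on descent.
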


\section[Dancing on RP\textasciicircum{}2]{Dancing on $\boldsymbol{\RP^2}$}\label{section2}
Let $\big(\U=\RP^2, \u\big)$ be a path geometry corresponding to a flat projective
structure on the projective plane. The dual path geometry
$\big(\Z=\big(\RP^2\big)^*, \z\big)$ is also flat, and the double fibration~(\ref{double_fib}) is just the classical Klein projective duality between
lines in $\RP^2$ and points in $\big(\RP^2\big)^*$. In this section we shall
represent a point $u\in\U$ by an equivalence class
of vectors $P\in \R^3$, the components of a vector being the homogeneous coordinates. Similarly a point $z\in\Z$ will be represented by a vector $L$ corresponding to a line in~$\RP^2$.

Let $M\subset \PP^2\times {\big(\PP^2\big)}^*$ be set of non-incident pairs
$(P, L)$, where $P\in \PP^2$, and $L\subset \PP^2$ is a line. Two pairs $(P, L)$ and $\big(\tP, \tL\big)$ are null-separated in the sense of the Definition~\ref{defi1}
if there exists
a line which contains the three points $\big(P, \tP, L\cap \tL\big)$. This null condition defines a co-dimension one cone in~$TM$:
generically there is no line through three given points.
The action of ${\rm SL}(3, \R)$ given by $(P, L)\rightarrow \big({\bf A}P, L{\bf A}^{-1}\big)$,
where ${\bf A}\in {\rm SL}(3, \R)$
is transitive on~$M$, and a group stabilising a non-incident pair (point, line)
is ${\rm GL}(2, \R)$ which sits in ${\rm SL}(3, \R)$ as a lower-diagonal block. Therefore~$M$
can be identified with ${\rm SL}(3, \R)/{\rm GL}(2, \R)$.

\begin{figure}[h]\centering
\includegraphics[scale=.27]{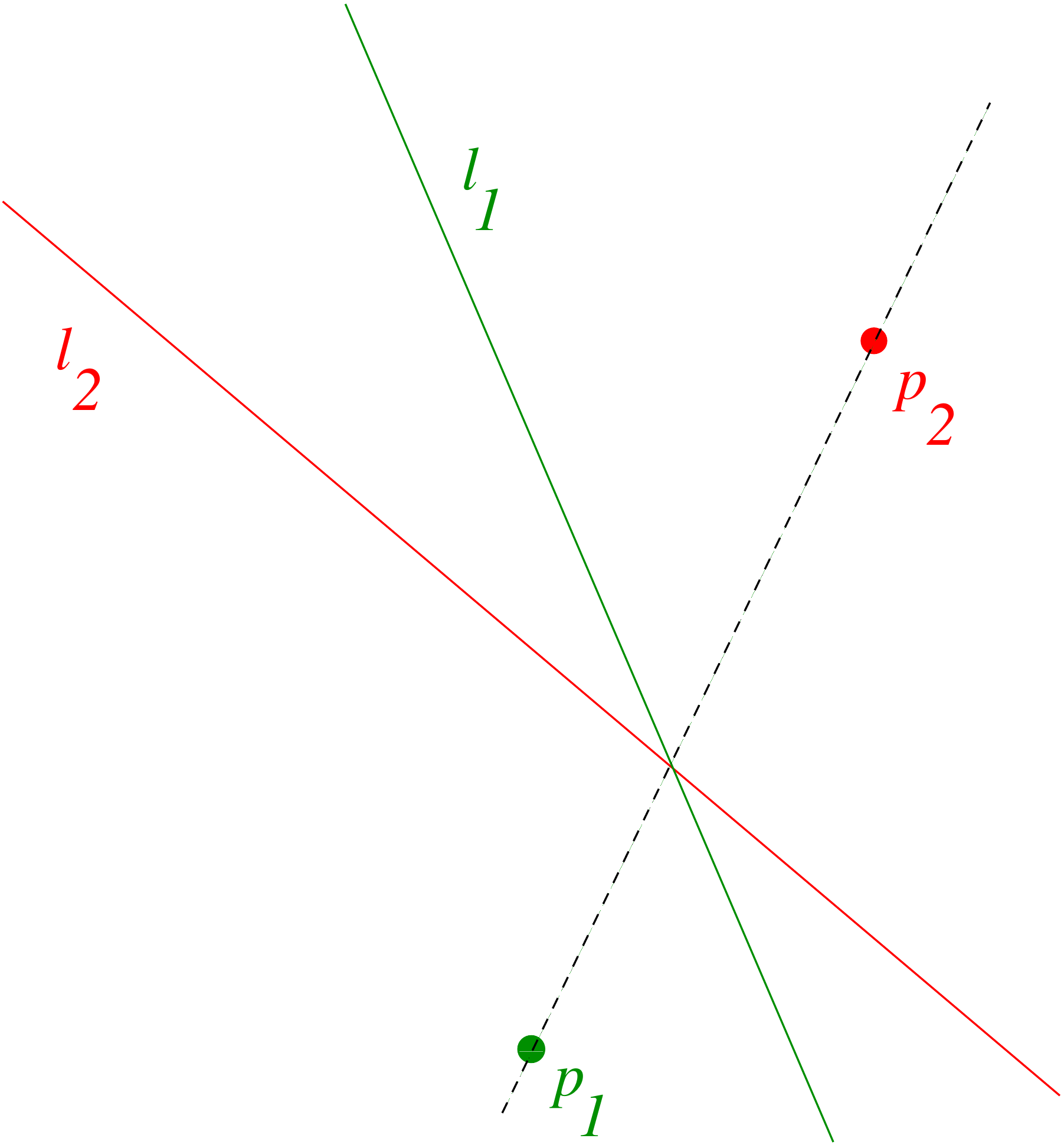}
\caption{Dancing pair.}
\end{figure}

\begin{Proposition}
There exist affine coordinates $\big(x^0, x^1, \zeta_0, \zeta_1\big)$
on $M={\rm SL}(3)/{\rm GL}(2)$ such that two points are dancing if and only if they are null separated
with respect to the anti-self-dual Einsten metric
\be
\label{affine_c}
g={\rm d}\zeta_0{\rm d}x^0+{\rm d}\zeta_1 {\rm d}x^1+(\zeta\cdot {\rm d}x)^2, \qquad \text{where}\quad
\zeta\cdot {\rm d}x\equiv
\zeta_0 {\rm d}x^0+\zeta_1 {\rm d}x^1.
\ee
\end{Proposition}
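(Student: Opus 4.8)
The plan is to translate Definitions~\ref{defi1} and~\ref{definull} into homogeneous coordinates, linearise, and then identify the change of variables that puts the resulting null cone into the form $g(\dot x,\dot\zeta)=0$. Represent $m=(P,L)\in M$ by $P\in\R^3\setminus\{0\}$ and $L\in\big(\R^3\big)^*\setminus\{0\}$ with $P\cdot L\ne 0$ (non-incidence), each defined up to scale. Since the common point $[L]\cap[\tilde L]$ is $[L\times\tilde L]$, Definition~\ref{defi1} says that $(P,L)$ and $(\tilde P,\tilde L)$ dance iff $[P]$, $[\tilde P]$, $[L\times\tilde L]$ are collinear, i.e.
\[
\det\big(P,\tilde P,L\times\tilde L\big)=\big(P\times\tilde P\big)\cdot\big(L\times\tilde L\big)=(P\cdot L)\big(\tilde P\cdot\tilde L\big)-\big(P\cdot\tilde L\big)\big(\tilde P\cdot L\big)=0 .
\]
Putting $\tilde P=P+t\dot P$, $\tilde L=L+t\dot L$ and using $L\times\tilde L=t\,L\times\dot L$, the left-hand side equals $t^2\det(P,\dot P,L\times\dot L)+O\big(t^3\big)$, so by Definition~\ref{definull} the null cone in $T_mM$ is
\[
\det\big(P,\dot P,L\times\dot L\big)=(P\cdot L)\big(\dot P\cdot\dot L\big)-\big(P\cdot\dot L\big)\big(\dot P\cdot L\big)=0 ;
\]
this expression is unchanged under $\dot P\mapsto\dot P+\lambda P$ and $\dot L\mapsto\dot L+\mu L$, so it descends to a well-defined (non-degenerate) quadric on $T_mM$, i.e.\ the null cone of a conformal structure on $M$.

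Next I would work in the chart of $M$ where $P$ and $L$ both have non-zero last homogeneous coordinate, normalising $P=\big(x^0,x^1,1\big)^{\rm T}$ and $L=(\ell_0,\ell_1,1)$, so that $\dot P=\big(\dot x^0,\dot x^1,0\big)^{\rm T}$, $\dot L=(\dot\ell_0,\dot\ell_1,0)$ and $P\cdot L=\ell\cdot x+1$ (writing $\ell\cdot x:=\ell_0x^0+\ell_1x^1$), which is nowhere zero by non-incidence. The null cone then reads $(\ell\cdot x+1)\big(\dot\ell\cdot\dot x\big)=\big(\dot\ell\cdot x\big)\big(\ell\cdot\dot x\big)$. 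The decisive step is to replace $\ell$ by
\[
\zeta_i:=\frac{\ell_i}{P\cdot L}=\frac{\ell_i}{\ell_0x^0+\ell_1x^1+1},\qquad\text{with inverse}\qquad \ell_i=\frac{\zeta_i}{1-\zeta\cdot x},
\]
where $1-\zeta\cdot x=(P\cdot L)^{-1}$ is again nowhere zero. This particular choice is dictated by requiring that the translations $x\mapsto x+a$, which are manifest isometries of~\eqref{affine_c}, act correctly on the chart: they send $P=(x,1)^{\rm T}\mapsto(x+a,1)^{\rm T}$ and act on $\ell$ projectively, whereas $\zeta$ is left fixed.

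Substituting $\ell=\zeta/(1-\zeta\cdot x)$ and the induced $\dot\ell$ into the null cone and clearing the common factor $(1-\zeta\cdot x)^{-3}$, the mixed terms proportional to $\big(\zeta\cdot\dot x\big)\big(\dot\zeta\cdot x\big)$ cancel, leaving $(1-\zeta\cdot x)\big(\dot\zeta\cdot\dot x+(\zeta\cdot\dot x)^2\big)=0$; since $1-\zeta\cdot x\ne 0$ this is precisely $g(\dot x,\dot\zeta)=0$ for $g$ as in~\eqref{affine_c}. To conclude, I would observe that the action $(P,L)\mapsto\big({\bf A}P,L{\bf A}^{-1}\big)$ of ${\rm SL}(3,\R)$ preserves the algebraic dancing condition above (it is built from ${\rm SL}(3,\R)$-equivariant data and $\det{\bf A}=1$), hence acts conformally on $\big(M,[g]\big)$; combined with transitivity on $M={\rm SL}(3,\R)/{\rm GL}(2,\R)$ this identifies $[g]$ with the invariant conformal class whose anti-self-dual Einstein representative of non-zero scalar curvature is the metric~\eqref{affine_c}, as in~\cite{bor,casey,DM}. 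Alternatively, one may just compute the curvature of~\eqref{affine_c} directly and verify that its self-dual Weyl tensor vanishes and its Ricci tensor is a constant multiple of $g$.

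The only genuinely delicate point is the choice of affine coordinates: the obvious line coordinates $(\ell_0,\ell_1)$ do not bring the null cone into the shape of~\eqref{affine_c}, and one must pass to the normalised coordinates $\zeta=\ell/(P\cdot L)$, found by insisting that the $x$-translations act isometrically. Once this substitution is in place, the remaining check is short — the potentially obstructing cross terms cancel identically.
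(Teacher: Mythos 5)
Your proposal is correct and follows essentially the same route as the paper: the collinearity determinant combined with the Binet--Cauchy identity reproduces the paper's pencil-elimination formula $(P\cdot L)\big(\tilde P\cdot\tilde L\big)-\big(P\cdot\tilde L\big)\big(\tilde P\cdot L\big)=0$, and your normalised coordinates $\zeta_i=\ell_i/(P\cdot L)$ are exactly the paper's choice $L=\big[\zeta_0,\zeta_1,1-x^0\zeta_0-x^1\zeta_1\big]$ with $P\cdot L=1$. The only (harmless) difference is presentational: you motivate the coordinate change via translation-invariance and verify the cancellation directly, while the paper first rescales to $P\cdot L=1$ and then reads off $g={\rm d}P\cdot{\rm d}L+(L\cdot{\rm d}P)^2$; like the paper, you defer the anti-self-dual Einstein property to the cited references or a direct curvature check.
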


\begin{proof}
To find an analytic expression for the conformal structure
resulting from the dancing condition
consider two pairs $(P, L)$ and $\big(\tP, \tL\big)$
of non-incident points and lines. Let $L+t\tL$ be a pencil of lines.
There exists $t$ such that
\be\label{dm1}
P\cdot \big(L+t\tL\big)=0, \qquad \tP\cdot \big(L+t\tL\big)=0.
\ee
Eliminating $t$ from (\ref{dm1}) gives
\[
(P\cdot L)\big(\tP\cdot \tL\big)-\big(\tP\cdot L\big)\big(P\cdot \tL\big)=0.
\]
Setting $\tP=P+{\rm d}P$, $\tL=L+{\rm d}L$ yields a metric
$g$ representing the conformal structure
\[
g=\frac{{\rm d}P\cdot {\rm d}L}{P\cdot L}-\frac{1}{(P\cdot L)^2}(L\cdot {\rm d}P)(P\cdot {\rm d}L).
\]
We can use the normalisation $P\cdot L=1$, so that $P\cdot {\rm d}L=-L\cdot {\rm d}P$,
and
\[
g={{\rm d}P\cdot {\rm d}L}+(L\cdot {\rm d}P)^2.
\]
In \cite{DM} the affine coordinates $\big(x^0, x^1, \zeta_0, \zeta_1\big)$ were introduced on $M$ such that
\[
P=\big[x^0, x^1, 1\big],\qquad
L=\big[\zeta_0, \zeta_1, 1-x^0\zeta_0-x^1\zeta_1\big]
\]
with a normalisation $P\cdot L=1$.
The resulting metric is~(\ref{affine_c}).
\end{proof}

In \cite{DM} the metric (\ref{affine_c})
was shown to be Einstein and anti-self-dual. Its isometry group is ${\rm SL}(3, \R)$, and the underlying embedding
of $\mathfrak{sl}(3)$ into $\mathfrak{sl}(4)$ was constructed in \cite{gro}.
The metric was analysed (albeit in different coordinates)
in the dancing context in \cite{bor}, and in connection with path geometries in~\cite{casey}.

Let the flag manifold $\I\equiv F_{12}\big(\C^3\big)\in \PP^2\times {\PP^2}^*$ be the set of incident pairs
$(p, l)$, such that $p\cdot l=0$. This is the twistor space \cite{AHS,Ward_cosmo}
of $(M, g)$.
A $\PP^1$ of $(p, l)\in \I$ corresponding to a point $(P, L)$
consists of all lines $l$ through $P$, and all points
$p=l\cap L$:
\[
P\cdot l=0, \qquad p\cdot L=0, \qquad p\cdot l=0.
\]
Let $(P, L)$ and $\big(\tP, \tL\big)$ be null separated. The corresponding lines in $F_{12}$
intersect at a point $(p, l)$ given by
$p=L\wedge \tL$, $l=P\wedge \tP$,
where $\big[L\wedge\tL\big]^{\gamma}=\epsilon^{\alpha\beta\gamma}L_{\alpha}\tL_{\beta}$ etc.
The incidence condition $p\cdot l=0$ now gives the conformal structure~(\ref{dm1}). This is an illustration of Penrose's nonlinear graviton construction~\cite{Penrose}, adapted to the non-zero cosmological constant~\cite{Ward_cosmo}.

\section{Dancing condition for general path geometries}\label{section3}
The incidence relation encoding the double fibration
(\ref{double_fib}) is given by a map $\psi\colon \U\times \Z\rightarrow \R$, where
$\I$ is the inverse image $\psi^{-1}(0)\subset \U\times \Z$.
Let $(u, z)\in M$, i.e., $\psi(u, z)\neq 0$, and let
$(\dot{u}, \dot{z})\in T_{(u, z)} M$. The infinitesimal dancing condition
is equivalent to the existence of a pair $(u^*, z^*)$ such that
\begin{gather*}
\psi(u, z^*)=0, \quad\ \psi(u^*, z)=0, \quad\ \psi (u^*, z^*)=0, \quad\
\dot{u}\frac{\p\psi}{\p u}(u, z^*)=0, \quad\
\dot{z}\frac{\p\psi}{\p z}(u^*, z)=0,
\end{gather*}
where $z^*$ is the path in the direction $\dot{u}$ containing $u$ and
the turning point $u^*$ of the path $z$. Here the notation $\dot{u}\p/\p u$ denotes a differentiation
along a vector field.

We now
eliminate $z^*$ between the 1st and the 4th condition, and eliminate
$u^*$ between the 2nd and the 5th condition. Substituting the resulting
expressions to the 3rd condition gives one relation between
$(u, z, \dot{u}, \dot{z})$ which is of the form (\ref{lagrangian}).

We shall say that a two-dimensional surface $\sigma\subset M$ is totally null
if all tangent vector fields in $T_m\sigma, m\in M$ are null in the sense of
Definition \ref{definull}.
\begin{Lemma}
\label{alpha_s}
There exists a three-parameter family of totally null surfaces
$($call them $\alpha$-surfaces$)$ in $M$ which correspond to points in
${\mathcal I}$. For any $(l, p)\subset {\mathcal I}$ the corresponding
$\alpha$-surface consists of $(L, P)\in M$ such that
\be
\label{alpha_def}
P\subset l, \qquad p\subset L, \qquad P\neq p, \qquad L\neq l.
\ee
\end{Lemma}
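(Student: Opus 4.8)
The plan is to exhibit the candidate surfaces directly from the incidence geometry, verify they are two-dimensional submanifolds of $M$, and then check that the dancing/null condition of Definition~\ref{definull} is satisfied by every tangent vector. First I would fix a pair $(l,p)\in\I$, so $p\cdot l=0$ in the $\RP^2$ picture (and more generally $\psi(p,l)=0$ in the notation of Section~\ref{section3}), and consider the set $S_{(l,p)}=\{(L,P)\in M : P\subset l,\ p\subset L\}$, i.e.\ the points $u=P$ lying on the fixed path $l$ of the dual geometry through $p$, paired with paths $z=L$ passing through the fixed point $p$. The two open conditions $P\neq p$ and $L\neq l$ guarantee $(L,P)\notin\I$ (since $P\subset l$ forces $P\in l$, which would make the pair incident only if $P=p$ — more carefully, one checks the non-incidence $P\not\subset L$ holds generically on this locus and certainly away from the excluded sub-loci), so $S_{(l,p)}\subset M$. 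Each of the two conditions $P\subset l$ and $p\subset L$ cuts a one-parameter family (the points of the path $l$, respectively the paths through $p$), so $S_{(l,p)}$ is two-dimensional, and as $(l,p)$ ranges over the three-dimensional $\I$ we get a three-parameter family.

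Next I would verify total nullity. Take $m=(L,P)\in S_{(l,p)}$ and a tangent vector $(\dot u,\dot z)=(\dot P,\dot L)$ tangent to $S_{(l,p)}$; thus $\dot P$ is tangent to the path $l$ at $P$, and $\dot L$ is tangent (in the dual surface) to the pencil of paths through $p$, at $L$. I must produce the pair $(u^*,z^*)$ of Section~\ref{section3}: $z^*$ is the path through $u=P$ in the direction $\dot u=\dot P$, and $u^*$ is the turning point of $z=L$ along the direction $\dot z=\dot L$. The key observation is that since $\dot P$ points along the path $l$, the path $z^*$ through $P$ in direction $\dot P$ is exactly $l$ itself — this is where the fixed path $l$ enters. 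Dually, since the deformation $\dot L$ moves $L$ within the pencil of paths through the fixed point $p$, the turning point $u^*$ of $L$ is $p$. Then the three remaining conditions $\psi(u,z^*)=0$, $\psi(u^*,z)=0$, $\psi(u^*,z^*)=0$ read $P\subset l$, $p\subset L$, $p\subset l$, all of which hold by construction and by the choice $(l,p)\in\I$. Hence $(\dot P,\dot L)$ is null, and $S_{(l,p)}$ is an $\alpha$-surface; comparing with \eqref{alpha_def} shows these are precisely the asserted surfaces.

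The main obstacle I expect is the identification "$z^*=l$ and $u^*=p$", i.e.\ showing that the first-order turning-point data of the curves on $S_{(l,p)}$ is genuinely governed by the fixed pair $(l,p)$ rather than just tangentially approximated; this requires being careful that Definition~\ref{definull} is a statement about the path in the direction $\dot u$ (so that a path through $P$ with the right tangent is unique, namely $l$) and that the "turning point" of $z$ under $\dot z$ — the intersection of $z$ with an infinitesimally nearby path — is well-defined and equals $p$ when $\dot z$ is tangent to the pencil through $p$. A secondary, more routine point is checking the dimension count and the non-incidence (that $S_{(l,p)}$ really sits inside $M$ and is a smooth surface), which follows from the submersivity of $\mu$ and $\nu$ in \eqref{double_fib} together with the transversality of their fibres. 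Once these are in place, the count of a three-parameter family and the correspondence with points of $\I$ is immediate from the construction.
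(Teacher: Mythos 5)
Your construction of the surfaces, the dimension count, and the correspondence with points of $\I$ are set up exactly as in the paper, and your proof is correct. The one place where you genuinely diverge is the verification of total nullity: you work infinitesimally, identifying $z^*=l$ and $u^*=p$ and checking the incidence conditions of Section~\ref{section3}, whereas the paper stays at the finite level of Definition~\ref{defi1}: for any two points $(L,P)$ and $\big(\widetilde L,\widetilde P\big)$ of the surface, $P$ and $\widetilde P$ lie on $l$ and $L\cap\widetilde L\ni p\in l$, so the single path $l$ contains all three required points and the two pairs are dancing. The finite argument is shorter, sidesteps the very issue you flag as the main obstacle (whether the turning point of $L$ under a deformation within the pencil through $p$ really is $p$), and proves slightly more --- every \emph{pair of points} of an $\alpha$-surface is dancing, not merely every tangent vector null --- with nullity of tangent vectors then following by letting $\big(\widetilde L,\widetilde P\big)\to(L,P)$. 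Your infinitesimal route matches the definition of ``totally null'' more literally, and your identification $u^*=p$ is justified: a first-order deformation of $L$ within the pencil of paths through $p$ meets $L$ at $p$ to leading order, which is exactly the definition of the turning point. One small point you could tighten rather than leave as ``generic'': the non-incidence $P\notin L$ on the locus \eqref{alpha_def} is automatic, since if $P$ lay on both $l$ and $L$ while $p$ also lies on both, then $l$ and $L$ would be two paths through the two distinct points $P\neq p$, forcing $L=l$ (locally, paths of a path geometry through two nearby points agree), contrary to assumption.
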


\begin{proof} The points in ${\mathcal I}$ clearly
correspond to surfaces in $M$: there is a one-dimensional family of
$P$s satisfying (\ref{alpha_def}), and once $P$ has been fixed, there is a
one dimensional family of $L$s.
To show that these surfaces are null
note that given $(L, P)$ and $\big(\widetilde{L}, \widetilde{P}\big)$ on the $\alpha$-surface $(l, p)$, the points $\big(P, \widetilde{P}, L\cap\widetilde{L}\big)$ all belong to the line $l$ in agreement with the Definition~\ref{defi1}.
\begin{figure}[h]\centering
\includegraphics[scale=.3]{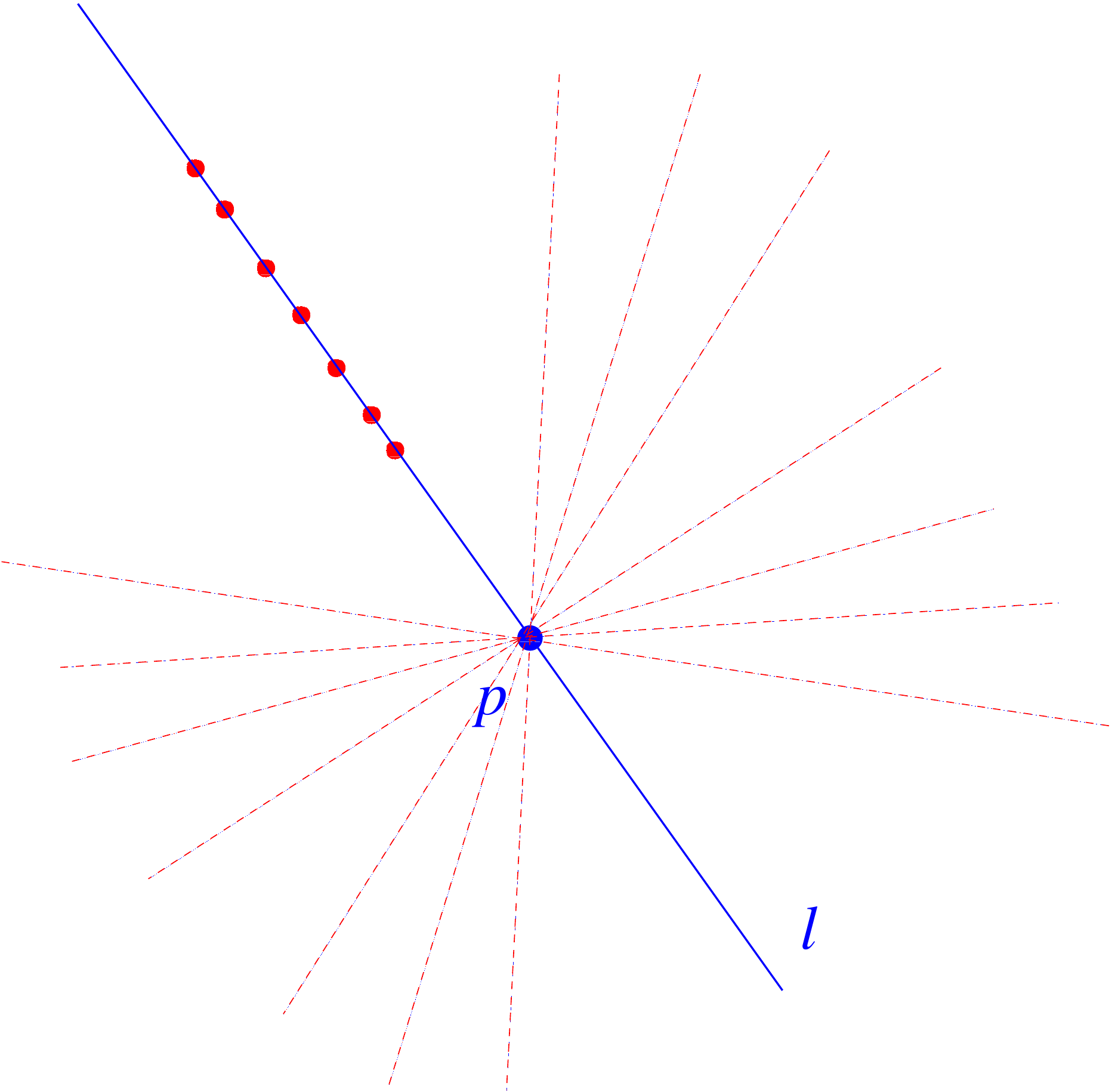}
\caption{$\alpha$-surface.}
\end{figure}
\end{proof}

In addition to the $\alpha$ surfaces $M$ also admits two two-parameter families
of null surfaces (call them $\beta$-surfaces) defined as follows.
The first family $\beta_1$ consist of a fixed point
$P\in \U$ (hence two parameters), and all lines $L$ in $\U$ not containing $P$. The second family $\beta_2$ consist of a fixed line $L\subset \U$ and all points $P$ not on $L$.
If we have instead chosen to model $M$ on the points and paths in $(\Z, \z)$,
then the roles of $\beta_1$ and $\beta_2$ would swap, but the notion
of $\alpha$ surfaces would stay the same. The result of Lemma~\ref{alpha_s} fits into the framework of
{\em half-flat causal structures}~\cite{krynski,omid}, where half-flatness is defined by the existence of a three-parameter family of surfaces whose tangent planes at each point give the ruling for the (not neccesarilty quadratic) null cone at that point. This terminology is motivated by the paradigm example of Penrose~\cite{Penrose}, where the cones are quadratic, and half-flatness condition is equivalent to the anti-self-duality of the Weyl tensor.

We are now ready to prove Theorem \ref{theo_rigid}, and thus establish the uniqueness
of the dancing metric.

\begin{proof}[Proof of Theorem \ref{theo_rigid}]
If there exists a conformal structure on $M$, then, by Lemma \ref{alpha_s}
it is necessarily anti-self-dual (with the right choice of the orientation).
Indeed, there exists a~three-parameter family of null surfaces. All null
surfaces in four dimensions are self-dual (SD) or anti-self-dual (ASD), and we choose an orientation
of $M$ such that the $\alpha$-surfaces are SD. Thus the Weyl
tensor of any metric $g$ in the conformal class $[g]$ is ASD by Penrose's nonlinear graviton theorem
\cite{Penrose} adapted to neutral signature.

Now we shall argue that the conformal structure $[g]$ defined
by the $\alpha$-surfaces of Lemma \ref{alpha_s} must contain an Einstein
metric. To show it, it is enough to demonstrate that the twistor
space~$\I$ of incident pairs ($\alpha$-surfaces) admits a contact structure.
The existence of an Einstein metric will then be a consequence of
the theorem of Ward \cite{Ward_cosmo}. The contact
structure is intrinsically defined by the double-fibration picture
(\ref{double_fib}). The space $\I$ is equipped with two one-dimensional
distributions defining the quotients to $\U$ and $\Z$. The resulting
two-dimensional distribution is non-integrable, and thus it defines
a one-form $\theta$ on $\I$ such that $\theta\wedge {\rm d}\theta\neq 0$.

We have shown (after the proof of Lemma \ref{alpha_s}) that
on top of the SD $\alpha$-surfaces there exist the two, two-parameter
family of $\beta$-surfaces.
Let $(\spp, \epsilon)$, and $(\spp, \epsilon')$ be two symplectic
real vector bundles over $M$ (see \cite{ADM, AHS}). We~shall make use of the two isomorphisms: $TM=\spp\otimes \spp'$, and
${\Lambda^2}_-=\spp\odot \spp$, where~${\Lambda^2}_-$ is the rank-3 vector bundle of
ASD two-forms.
Let the two families of
$\beta$ surfaces correspond to ASD two-forms
$\Sigma_1$ and $\Sigma_2$ in $\Gamma\big({\Lambda^2}_-\big)$, or equivalently to two
sections~$\iota$,~$\tilde{\iota}$ of~$\spp$.

We can rescale $\iota$ so that the corresponding two-form is
closed, and proportional to \mbox{${\rm d}x^1\wedge {\rm d}x^2$} for some functions $\big(x^1, x^2\big)$ on~$M$
which are constant on each $\beta$-surface in the first two parameter family.
The Frobenius integrability conditions now imply the local existence of
two more coordinates such that
$\zeta_0$ and $\zeta_1$ on~$M$ such that $\operatorname{Ker}(\Sigma_1)=
\operatorname{span}\{\partial/\partial \zeta_0, \partial/\partial \zeta_1\}$.
The functions $(\zeta_0, \zeta_1)$ are then the coordinates on the $\beta$-surface.
The corresponding metric takes the form
\be
\label{metric_md}
g={\rm d}\zeta_A\odot {\rm d}x^A+\Theta_{AB}(x, \zeta){\rm d}x^A\odot {\rm d}x^B, \qquad A, B=0, 1
\ee
for some symmetric two-by-two matrix $\Theta$. The anti-self-duality
condition on the Weyl tensor forces the components of $\Theta$ to be at most
cubic in $(\zeta_0, \zeta_1)$, with additional algebraic relations between the components. Imposing the Einstein condition gives
\be
\label{thetaAB}
\Theta_{AB}=\zeta_A\zeta_B+\Rho_{(AB)}-\Gamma_{AB}^C\zeta_C,
\ee
where the coefficients $\Gamma_{AB}^C$ only depend on $\big(x^0, x^1\big)$,
and $\Rho_{AB}$ is the projective Schouten tensor of
affine connection $\nabla$ with connection components~$\Gamma_{AB}^C$.
The $\beta$-distribution defined by~$\Sigma_1$ is parallel, and
\[
\nabla^g \Sigma_1=6{\mathcal A}\otimes \Sigma_1,
\]
where $\nabla^g$ is the Levi-Civita connection of $g$, and the one-form~${\mathcal A}$ is a symplectic connection for the symplectic structure
structure~\cite{DM} given by
\be\label{omega11}
\Omega={\rm d}{\mathcal A}={\rm d}\zeta_A\wedge {\rm d}x^A+\Rho_{AB}{\rm d}x^A\wedge {\rm d}x^B.
\ee
Up to this point our computation has followed \cite{DM}. Now consider the second
family of $\beta$ surfaces given by the ASD two-form $\Sigma_2$, which in our
chosen coordinate system is given by
\[
\Sigma_2= \epsilon^{AB}\big({\rm d}\zeta_A+\Theta_{AC}{\rm d}x^C\big)\wedge \big({\rm d}\zeta_B+\Theta_{BD}{\rm d}x^D\big).
\]
The normalisation condition $\epsilon(\iota, \tilde{\iota})=1$, and
the condition that the distribution defined by $\Sigma_2$ is parallel
together imply that this two-form satisfies
\[
\nabla^g\Sigma_2=-6{\mathcal A}\otimes \Sigma_2,
\]
where ${\mathcal A}$ is given by (\ref{omega11}). We~will need only the skew-symmetric part of
this equation, which gives
\begin{gather*}
0={\rm d}\Sigma_2+6{\mathcal A}\wedge\Sigma_2 =\epsilon^{CD}\nabla_{[A}\Rho_{B]C} {\rm d}x^A\wedge {\rm d}x^B\wedge {\rm d}\zeta_D,
\end{gather*}
where the second equality follows from a rather complicated computation which
we have performed on MAPLE. Therefore this second family of $\beta$-surfaces
exists iff $\nabla_{[A}P_{B]C}=0$, which (for two-dimensional projective structure) is precisely the condition (see, e.g., \cite{BDE}) that the
connection~$\nabla$ is projectively
equivalent to a flat connection, where we can chose
 $\Gamma_{AB}^C=0$ and $\Rho_{AB}=0$. It was shown in~\cite{DM} that
the metric~(\ref{metric_md}) with $\Theta_{AB}$ given by~(\ref{thetaAB})
is invariant under projective changes of connection complemented by an affine
translation of $\xi_A$. Therefore the resulting metric is of the form
(\ref{affine_c}), which completes the proof.
\end{proof}

\section{Dancing ellipses}\label{section4}

\subsection[A dual pair of SL(2)-invariant path geometries]{A dual pair of $\boldsymbol{{\rm SL}(2)}$-invariant path geometries}

Let $\Z$ be the set of ellipses in $\R^2$, centered at the origin and of area $\pi$. Each such ellipse is given uniquely by an equation of the form
\be\label{I}
Ex^2+2Fxy+Gy^2=1,
\ee
where $(E,F,G)\in \R^3$ satisfy
\be\label{Z}
EG-F^2=1, \qquad E>0.
\ee
Thus $\Z$ can be identified with a two-dimensional surface, one sheet of the two-sheeted hyperboloid in $\R^3$ given by equation~\eqref{Z}.

For each $z=(E,F,G)\in \Z$, denote by $\zh\subset \U:=\R^2\setminus \{(0,0)\}$ the corresponding ellipse (the solution space of~\eqref{I} with the given~$E$, $F$, $G$). For example, for $z=(1,0,1)$, $\zh$ is the unit circle centered at the origin.

For each $u=(x,y)\in\U$ let $\uh\subset \Z$ be the set of ellipses that pass through $u$. Using our hyperboloid model for $\Z$, we can see from equation~\eqref{I} that $\uh$ is the intersection of some affine plane in $\R^3$ with $\Z$. For example, for $u=(1,0)$,
$\uh$ is the parabola $G=F^2+1$ in the plane $E=1$.
In fact, the space $\Z$ is the hyperboloid model of the hyperbolic plane and the curves $\uh\subset Z$, $u\in\U$, are the {\em horocycles} of $\Z$ (circles tangent to the real axis, in the Poincar\'e upper half plane model).
Note that the map $u\mapsto\uh$ is $2:1$. Namely, $u$ and $-u$ define the same curve in $\Z$.

In this section we shall find it convenient to use the following form
of the infinitesimal dancing condition (compare Definition \ref{definull})
\begin{Definition}Let $M\subset \U\times\Z$ be the subset of {\em non-incident} pairs $(u,z)$. A tangent vector $(\dot u, \dot z)\in T_{(u,z)}M$ satisfies the {\em dancing condition} if there is an {\em incident} pair $(u^*, z^*)\in\U\times\Z$ such that
\begin{enumerate}\itemsep=0pt
\item[$(1)$] $\dot u$ is tangent to $\zh^*$ at $u$,
\item[$(2)$] $\dot z$ is tangent to $\uh^*$ at $z$.
\end{enumerate}
\end{Definition}

Therefore the vector is null in the sense of Definition~\ref{definull} iff it satisfies the dancing condition.

The manifold $M$ is the disjoint union of two connected components $M=M_{\rm in}\cup M_{\rm out}$, where~$M_{\rm in}$ is the set of pairs $(u,z)$ such that $u$ lies inside of the ellipse $\zh$ (or $z$ lies inside the horocycle~$\uh$) and $M_{\rm out}$ is the set of pairs $(u,z)$ such that $u$ lies outside $\zh$ (or~$z$ lies outside~$\uh$). The group ${\rm SL}(2, \R)$ acts (diagonally) on $M$, preserving each of the two components and the dancing condition on it.

\subsection{Dancing conditons and sextics}
We are now ready to establish Theorem~\ref{theo_2}. We~shall explicitly construct a sextic form
\[
S\in \mbox{Sym}^4(T\U)\otimes \mbox{Sym}^2(T\Z)\subset \mbox{Sym}^6(TM)
\]
such that a vector field $V\in \Gamma(TM)$ is null iff $S(V, V, V, V, V, V)=0$.

\begin{proof}[Proof of Theorem \ref{theo_2}]
We parametrize $\Z$ by the upper half
plane
$H=\big\{(a,b)\in\R^2\st b>0\big\}$. Define $f\colon H\to\Z$ by
\[
f(a,b)=\frac{1}{b}\big(a^2+b^2, -a, 1\big).
\]
Then the incidence condition equation~\eqref{I} can be written as
\be\label{U}
\Phi(x,y,a,b):=\big(a^2+b^2\big)x^2 -2 a x y+y^2-b=0.
\ee
The dancing condition on $(\dot u, \dot z)\in T_{(u,z)}M$
amounts to the existence of a pair $(u^*, z^*)\in\U\times\Z$ satisfying
\begin{align}\label{cC}
&\Phi(u, z^* ) = \Phi(u^*,z^*)=\Phi(u^*,z )= \frac{\partial \Phi}{\partial u}(u,z^*)\dot u=
 \frac{\partial \Phi}{\partial z}(u^*,z)\dot z=0.
\end{align}
Using the coordinates
\[
u=(x,y),\quad\ z=(a,b),\quad\ u^*=(X,Y),\quad\ z^*=(A,B),\quad\ \dot u=(\dot x, \dot y),\quad\ \dot z=(\dot a, \dot b),
\]
these equations are
\begin{gather}
\big(A^2+B^2\big) x^2-2 A xy+y^2-B=0, \nonumber
\\
\big(A^2+B^2\big) X^2-2 A XY+Y^2-B=0, \nonumber
\\
\big(a^2+b^2\big) X^2-2 a XY +Y^2-b=0, \nonumber
\\
\big[\big(A^2+B^2\big) x- A y\big]\dot x+ ( y- A x)\dot y=0, \nonumber
\\
2X( a X- Y)\dot a+\big(2 b X^2-1\big)\dot b =0.\label{cCC}
 \end{gather}

The dancing condition on $TM$ is ${\rm SL}(2, \R)$-invariant, so it is enough to study it along a {\em section} of the
${\rm SL}(2, \R)$-action on $M$, i.e., a subset $\Sigma\subset M$ intersecting every
${\rm SL}(2, \R)$-orbit. We~take $\Sigma=\{x=1, y=0, a=0\}\subset M$, with $b$ a coordinate along $\Sigma$ ($0<b<1$ for $M_{\rm in}$ and $b>1$ for~$M_{\rm out}$). Along $\Sigma$, equations \eqref{cCC} reduce to
\begin{gather*}
A^2+B^2=B, \\
B \big(X^2-1\big)-2 A XY+Y^2=0 \\
b^2 \big(X^2-1\big) +Y^2=0 \\
 B\dot x- A \dot y=0 \\
2X Y\dot a+\big(1-2 b X^2\big)\dot b =0.
 \end{gather*}
We use the 1st and 4th equation to solve for $A$, $B$ and substitute in the other~3 equations, obtaining
\begin{gather}
Y^2 \dot x^2- 2 X Y{\dot x} {\dot y} + \big(X^2+Y^2-1\big){\dot y}^2= 0, \nonumber
\\
b^2 X^2+Y^2-b =0, \nonumber
\\
2 X Y {\dot a} + \big(1-2 b X^2\big){\dot b} = 0.
\label{eq:nullred}
\end{gather}
Now we substitute $Y=PX$ in equations~\eqref{eq:nullred}, use the 2nd obtained equation to solve for $X^2$ and substitute in the other two, obtaining
\begin{gather}
\big[b{\dot x}^2+(b-1) {\dot y}^2\big] P^2-2 b {\dot x} {\dot y}P
 -b(b-1) {\dot y}^2=0,\nonumber
 \\
 \dot b P^2+ 2 b{\dot a}P -b^2{\dot b} =0.\label{eq:red}
\end{gather}
To eliminate $P$ we take the resultant\footnote{
 The resultant of two polynomials of one variable is a polynomial in their coefficients, that vanishes iff they have a common root.
The resultant of $a_0+a_1P+a_2P^2$, $b_0+b_1P+b_2P^2$ is $(a_0b_2-a_2b_0)^2-(a_0b_1-a_1b_0)(a_1b_2-a_2b_1).$
}
of the two quadratic polynomials in $P$ in equations~\eqref{eq:red}, obtaining (after dividing by $b^2$)
\begin{gather}
 b^4 {\dot b}^2 {\dot x}^4
 -4 b^3 {\dot a} {\dot b} {\dot x}^3 {\dot y}
 +2 b^2 \big[((b-2) b-1) {\dot b}^2-2 (b-1) {\dot a}^2\big]{\dot x}^2 {\dot y}^2
 +4 b \big(1-b^2\big) {\dot a} {\dot b} {\dot x}{\dot y}^3\label{sex_gil}
 \\ \qquad
{} +(b-1)^2 \big[(b-1)^2 {\dot b}^2-4b {\dot a}^2\big]{\dot y}^4 =0.
\tag*{\qed}
\end{gather}
\renewcommand{\qed}{}
\end{proof}

The bi-degree of the sextic
(\ref{sex_gil}) has a geometric explanation. For fixed $(u,z)$ and $\dot u$, one draws in $\Z$ the horocycle $\uh$, the point $z$ and the turning point $z^*\in\uh$ corresponding to $\dot u$. There are then exactly two horocycles $\uh_1^*$, $\uh_2^*$ passing through $z$, $z^*$, giving two
tangent directions~$\dot z$ at~$z$.

Similarly, for fixed $(u,z)$ and $\dot z$, there are four turning points $\pm u_1^*, \pm u_2^*\in \zh,$ corresponding to~$\dot z$. For each antipodal pair $\pm u_i^*$ there are two ellipses $\zh^*$ passing through $u$, $\pm u_i^*$ (they correspond to the two intersection points of the horocircles $\uh$, $\uh^*_i$), giving altogether four tangent direction~$\dot u$ at~$u$.

 \subsubsection{2nd order ODEs}
Let the three-dimensional
$\cI\subset \U\times\Z$ be the solution set of equations \eqref{I}. The natural projections $\cI\to\U$, $\cI\to\Z$ define a double fibration (\ref{double_fib})
so that the fibers of one fibration project to paths in the base of the other fibration,
defining a pair of path geometries. This is a curved version of the more familiar
classical point-line projective duality.
A major difference between this curved case and the flat case is
that the symmetry group drops from ${\rm PSL}(3, \R)$ to~${\rm PSL}(2, \R)$.
A path geometry in the $xy$ plane, such as the one given by equations~\eqref{I} and~\eqref{Z} (or~equivalently, equations \eqref{U}),
is determined uniquely by a 2nd order ODE
\[
y''=F(x,y,y').
\]
To find it, one takes the defining equation, say \eqref{U},
and its two derivatives wrt $x$, assuming $y=y(x)$.
Eliminating $a, b$ and solving for $y''$, we obtain
\be\label{ode1}
y''=(xy'-y)^3.
\ee
The dual path geometry, in the $ab$ plane, is obtained by a similar process, for a function $b=b(a)$.
Eliminating $x$, $y$ and solving for $b''$, we obtain
\[
b''=\epsilon\frac{\big(\sqrt{1+b'^2}-\epsilon\big)\big(1+b'^2\big)}{b}, \qquad \epsilon=\pm 1
\]
(so there are in fact two equations).
These are two examples of (dual) 2D path geometries with a three-dimensional
(local) group of symmetries, classified in 1896 by Tresse \cite[p.~76]{Tr}.

The equation \eqref{ode1} is cubic in $y'$, as so it defines a projective structure. This projective structure is metrisable (see \cite{BDE}) as the integral
curves of \eqref{ode1} are unparametrised geodesics
of a~(pseudo) Riemannian metric. There is in fact a three parameter family projectively equivalent metrics with these paths as unparametrized geodesics. We~can arrange for one of these metrics to be invariant under rotations about the origin in the $xy$ plane. In polar coordinates it is given~by
\[
{\rm d}s^2=\frac{{\rm d}r^2}{(1+r^4)^2}+\frac{r^2{\rm d}\theta^2}{1+r^4}, \qquad
\text{where}\quad x+{\rm i}y=r{\rm e}^{{\rm i}\theta}.
\]

\section{Dancing conics}
\label{section5}
Let $N\subset\PP^2\times\PP^5$ be a set of non-incident pairs $(a, A)$, where
$a\in \PP^2$ and $A$ is an irreducible conic. We~say that two pairs
$(a, A)$ and $(b, B)$ are {\em dancing} if there exists a conic $C$ which passes
through six points $(a, b, A\cap B)$ in $\PP^2$, where $A\cap B$ are the four
intersections of the conics $(A, B)$.
Note that
the dancing condition on $N$ is ``similar'' to that given in Definition~\ref{defi1} for path geometries, in that both conditions
define co-dimension one cones in $TN$ and $TM$ respectively: generically there is no line
through a three given points, and there is no conic through a six given points.
\begin{figure}[h]\centering
\includegraphics[scale=.3]{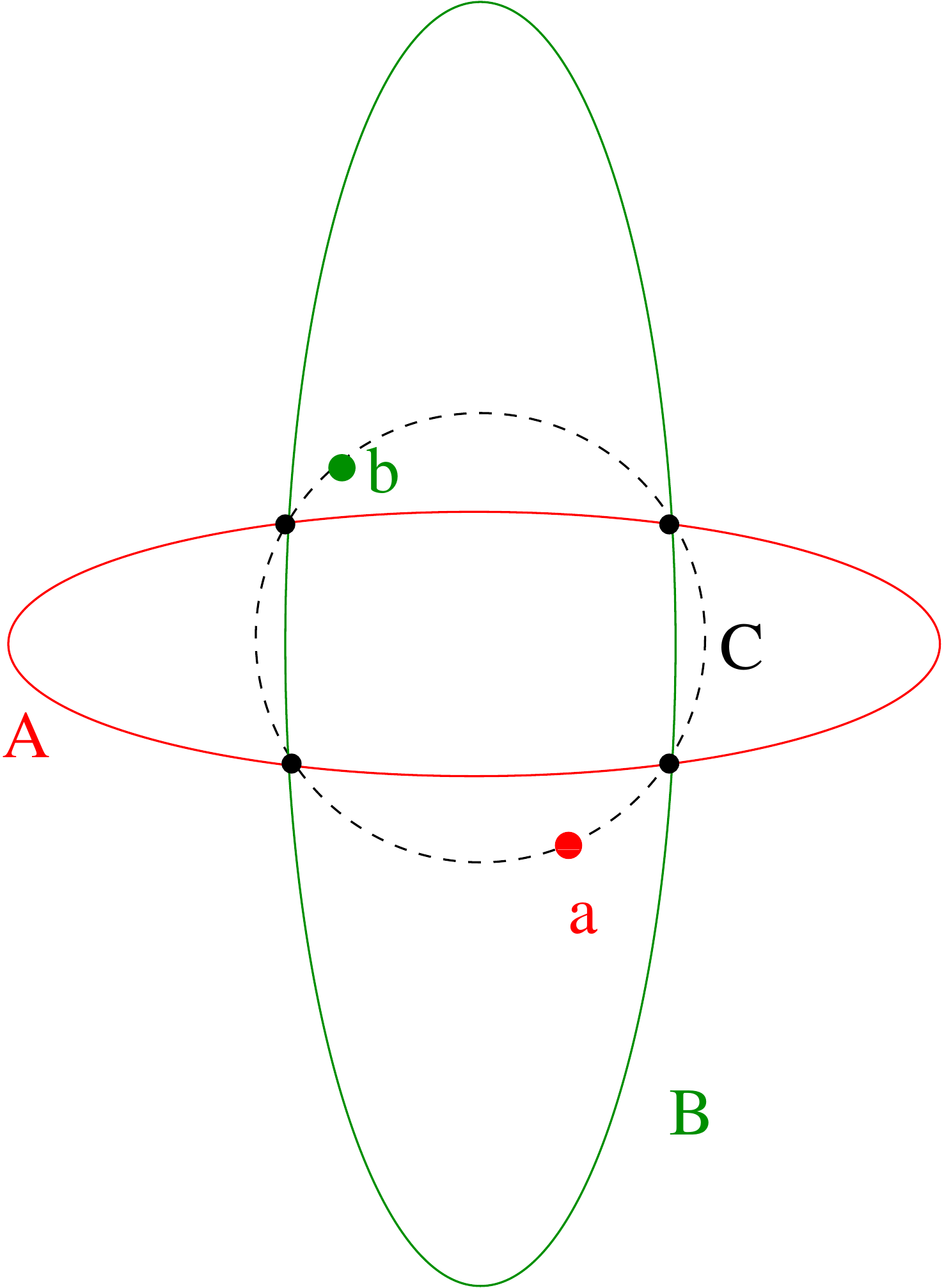}
\caption{Dancing conics.}
\end{figure}

\begin{proof}[Proof of Theorem \ref{theo3}]
The dancing condition
is
\be\label{dancing_f}
\big(a^{\rm T} A a\big)\big(b^{\rm T}B b\big)-\big(a^{\rm T}Ba\big)\big(b^{\rm T} A b\big)=0,
\ee
where (by a slight abuse of notation) $A$ above is a symmetric matrix
representing a conic, and $a^{\rm T}=\big[a^1, a^2, a^3\big]$ is a vector representing the homogeneous coordinates of $a\in\PP^2$. To prove~(\ref{dancing_f}) note that the
conic $C$ must belong to the pencil $(A+tB)$, and contain the points $(a, b)$ so that
\[
a^{\rm T}(A+tB)a=0, \qquad b^{\rm T}(A+tB)b=0.
\]
Elliminating $t$ between these two equations yields (\ref{dancing_f}).

Let $\pi\colon N\rightarrow M$ be a projection given by $\pi(a, A)=(a, \alpha)$,
where $\alpha$ is the polar line of $A$ with respect to $a$.
The fibres of $\pi$ are three-dimensional. For example
if $a$ is a point in an affine space, and $\alpha$ is a line at infinity, then the fiber
of $(a, \alpha)$ consists of all ellipses centred at $a$.

Consider the dancing condition (\ref{dancing_f}) on $N$, and
set $B=A+\epsilon\dot{A}$, and $b=a+\epsilon\dot{a}$, where $\epsilon$ is
small. Substituting this in (\ref{dancing_f}), and keeping the
the second order in $\epsilon$ yields a quadratic condition
\be
\label{ini_gil}
\big(a^{\rm T}\cdot A\cdot a\big)\big(a^{\rm T}\cdot\dot{A}\cdot\dot{a}\big)-\big(a^{\rm T}\cdot\dot{A}\cdot a\big)\big(a^{\rm T}\cdot A\cdot \dot{a}\big)
\ee
(where $\cdot$ denotes the matrix multiplication). The corresponding (degenerate) conformal
structure on $M$ is
\[
G=\big(a^{\rm T}\cdot A\cdot a\big)\big(a^{\rm T}\cdot d{A}\odot d{a}\big)-\big(a^{\rm T}\cdot d{A}\cdot a\big)\odot \big(a^{\rm T}\cdot A\cdot d{a}\big).
\]
We can use inhomogeneous coordinates on $N$ with the pair
$(a, A)$ represented by $a=(x, y, 1)$ and the symmetric matrix $A$ with
$A_{33}=1$. The ${\rm SL}(3, \R)$ invariance alows us without loss of generality to take
the pair $(a, A)$ with $ a_0=(0,0,1)^{\rm T}$, $A_0=\operatorname{diag}(-1,-1,1)$. Then (\ref{ini_gil}) reduces to the quadric
\[
\dot x {\dot A}_{13}+\dot y {\dot A}_{23},
\]
with signature $++--000$, as stated, and
with kernel given by a tangnt subspace
 $\dot x={\dot A}_{13}=\dot y= {\dot A}_{23}=0$.
Now the tangent to the fiber of $\pi$ at $(a_0, A_0)$
is the kernel of the derivative of $\pi$ at $(a_0,A_0)$.
Since $\pi(a, A)=\big(a,a^{\rm T}A\big)$, the derivative at $(a_0, A_0)$
is \[(\dot a,\dot A)
\mapsto \big(\dot a^{\rm T} A_0+ a_0^{\rm T}\dot A, \dot a\big),\]
hence the kernel is given by $\dot Aa_0=\dot a=0,$ or $\dot x={\dot A}_{13}=\dot y= {\dot A}_{23}=0$ as stated.
\end{proof}

\begin{Remark}
One can check that the dancing condition on $N$ is {\em not} the pull back of the dancing condition on $M$, although both are ${\rm SL}(3, \R)$-invariant and horizontal w.r.t.\ $\pi\colon N \to M$. If it were, then
if $(a, A)$ and $(b, B)$ are dancing in $N$, then $\pi(a, A)$, and
$\pi(b, B)$ are dancing in~$M$.
 We shall now argue that this is not the case.
To see it, note that in homogeneous coordinates the projection
$\pi$ is given by $\big(a^i, A_{jk}\big)\rightarrow \big(a^i, \alpha_j=A_{jk}a^k\big)$.
Let $(a, A)$ and $(b, B)$ belong to~$M$.
The intersection of polar lines
$a^{\rm T} A z=0$, $b^{\rm T} B z=0$
is $c$, where $c^i=\epsilon^{ijk}A_{jp}B_{kq}a^pb^q$. This point is co-linear with
$(a, b)$ if $\epsilon_{ijk}c^ia^jb^k=0$. Using the identity
$\epsilon_{ijk}\epsilon^{imn}={\delta_j}^m{\delta_k}^n-{\delta_j}^n{\delta_k}^m$
yields
\[
\big(a^{\rm T} A a\big)\big(b^{\rm T}B b\big)- \big(a^{\rm T} B b\big)\big(b^{\rm T}Aa\big)=0,
\]
which is different than (\ref{dancing_f}).
\end{Remark}

\subsection*{Acknowledgements} This project originated from discussions
with Gil Bor. I am very grateful to Gil for explaining the work~\cite{bor}
to me, and for sharing his geometric insight on Kepler and Hook ellipses.
I thank the anonymous reviewers for their careful reading of the manuscript and many insightful suggestions. I also thank the Mathematics Research Center (CIMAT) in Guanajuato
for hospitality, when some of this research was done.
My research has been
partially supported by STFC grants ST/P000681/1,
and ST/T000694/1.

\pdfbookmark[1]{References}{ref}
\LastPageEnding

\end{document}